\documentclass[11pt]{article}

\usepackage[T1]{fontenc}
\usepackage[utf8]{inputenc}
\usepackage{lmodern}
\usepackage[a4paper,margin=1in]{geometry}
\usepackage{amsmath,amssymb,amsthm,mathtools}
\usepackage{microtype}
\usepackage{xcolor}
\usepackage[colorlinks=true,
            linkcolor=blue!45!black,
            citecolor=blue!45!black,
            urlcolor=blue!45!black]{hyperref}

\allowdisplaybreaks

\newtheorem{theorem}{Theorem}[section]
\newtheorem{lemma}[theorem]{Lemma}
\newtheorem{proposition}[theorem]{Proposition}
\newtheorem{corollary}[theorem]{Corollary}
\theoremstyle{definition}
\newtheorem{problem}[theorem]{Problem}

\newcommand{\cpn}{\operatorname{cp}}
\newcommand{\ccn}{\operatorname{cc}}

\title{On the difference between clique partition and clique covering numbers}
\author{Bo Ning}
\date{30 July 2026}

\begin{document}

\maketitle

\begin{abstract}
For a graph $G$, let $\cpn(G)$ and $\ccn(G)$ denote the minimum numbers of
cliques whose edge sets partition and cover $E(G)$, respectively, and put
$f(n)=\max_{|V(G)|=n}\bigl(\cpn(G)-\ccn(G)\bigr).$
In 1983, Erd\H{o}s, Faudree, and Ordman asked whether there is a sequence of
graphs $G_n$ such that $|V(G_n)|=n$ and
$\cpn(G_n)-\ccn(G_n)=n^2/4+O(n)$. The question appears as Problem 66 in Chung’s survey \cite{ChungProblems} and is also listed on the
UCSD Erd\H{o}s Problems website. Caccetta, Erd\H{o}s, Ordman, and Pullman
proved that $f(n)=n^2/4-o(n^2)$. We prove that
$f(n)=\left\lfloor\frac{n^2}{4}\right\rfloor-\Theta(n^{4/3}),$
and hence answer the question in the negative.
\end{abstract}

\section{Introduction}

In this paper, we consider graphs which are finite, simple, and undirected. A  \emph{clique
cover} of a graph $G$ is a family $\mathcal C$ of cliques such that
$E(G)\subseteq\bigcup_{Q\in\mathcal C}E(Q)$. If the sets $E(Q)$,
$Q\in\mathcal C$, partition $E(G)$, then $\mathcal C$ is a \emph{clique
partition}. That is, a \emph{clique partition} of \(G\) is a family of
cliques $\mathcal{C}=\{C_1,C_2,\ldots,C_t\}$
such that
$E(G)=E(C_1)\,\dot\cup\,E(C_2)\,\dot\cup\cdots\dot\cup\,E(C_t),$
where \(\dot\cup\) denotes disjoint union. We denote the minimum cardinalities of a clique cover and a clique
partition of $G$ by $\ccn(G)$ and $\cpn(G)$, respectively. Thus,
$\ccn(G)\leq \cpn(G)$.

Clique covers arise naturally from intersection representations. Given a
clique cover of $G$, assign to each vertex the set of covering cliques that
contain it. Two vertices are adjacent precisely when their assigned sets
intersect. Conversely, every intersection representation of $G$ gives a
clique cover. Requiring two representing sets to intersect in at most one
element corresponds to a clique partition and connects the problem with finite
linear spaces; see de Bruijn and Erd\H{o}s~\cite{deBruijnErdos}.

For clique partition, a classic theorem due to de Bruijn and Erd\H{o}s~\cite{deBruijnErdos} states that for $n\geq 3$, if $\mathcal C$ is a non-trivial clique partition of $K_n$, then $|\mathcal C|\geq n$.
Erd\H{o}s, Goodman, and P\'osa~\cite{EGP} proved that every $n$-vertex graph
$G$ satisfies
\begin{equation}
 \cpn(G)\leq \left\lfloor\frac{n^2}{4}\right\rfloor,
 \label{eq:EGP}
\end{equation}
and that the partition may be chosen to consist only of edges and triangles.
The balanced complete bipartite graph shows that \eqref{eq:EGP} is sharp. For
this graph, however, every clique containing an edge has order two, so
$\ccn(G)=\cpn(G)$. A
weighted strengthening of \eqref{eq:EGP}, proved independently by Gy\H{o}ri
and Kostochka~\cite{GyoriKostochka}, Chung~\cite{ChungDecomposition}, and
Kahn~\cite{Kahn}, will be used in the proof of our upper bound.

Set
\[
 f(n)=\max_{|V(G)|=n}\bigl(\cpn(G)-\ccn(G)\bigr)
 \quad\text{and}\quad
 g(n)=\left\lfloor\frac{n^2}{4}\right\rfloor-f(n).
\]
In a personal communication in 1982, Erd\H{o}s asked for the order of the
largest possible difference. Caccetta, Erd\H{o}s, Ordman, and
Pullman~\cite{CEOP} determined its first-order behaviour. They constructed
$n$-vertex graphs $G_n$ satisfying
\begin{equation}
 \cpn(G_n)-\ccn(G_n)
   =\frac{n^2}{4}-\frac{n^{3/2}}{2}+\frac n4+O(1),
 \label{eq:CEOP-construction}
\end{equation}
and proved that
$f(n)\leq \lfloor n^2/4\rfloor-2$. Consequently,
$f(n)\sim n^2/4$, while \eqref{eq:CEOP-construction} gives
$g(n)=O(n^{3/2})$.

The Erd\H{o}s Problems website~\cite{ErdosProblemsWebsite} attributes the
following question to Erd\H{o}s, Faudree, and Ordman in 1983.

\begin{problem}[Erd\H{o}s--Faudree--Ordman]\label{Prob:1.1}
Is there a sequence of graphs $G_n$ with $|V(G_n)|=n$ such that
\[
 \cpn(G_n)-\ccn(G_n)=\frac{n^2}{4}+O(n)?
\]
\end{problem}

Chung recorded this question, together with the corresponding ratio problem,
as Problem~66 in~\cite{ChungProblems}. Faudree, Rousseau, and
Schelp~\cite[Questions~6.31--6.32]{FRS} stated two questions separately:
Question~6.31 concerns the largest possible ratio $\cpn(G)/\ccn(G)$, whereas
Question~6.32 asks whether $g(n)=O(n)$.

In this paper, we prove that $g(n)$ has order $n^{4/3}$.

\begin{theorem}\label{Thm:Main}
There are absolute constants $C_1,C_2>0$ such that
\[
 \left\lfloor\frac{n^2}{4}\right\rfloor-C_1n^{4/3}
 \leq f(n)\leq
 \left\lfloor\frac{n^2}{4}\right\rfloor-C_2n^{4/3}
\]
for every sufficiently large integer $n$.
\end{theorem}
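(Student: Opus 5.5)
Both bounds have to be proved, and they rest on different ideas.

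\emph{Lower bound (construction).} We want $n$-vertex graphs with $\cpn(G)\ge\lfloor n^2/4\rfloor-O(n^{4/3})$ and $\ccn(G)=O(n^{4/3})$. We improve the Caccetta--Erd\H{o}s--Ordman--Pullman idea by taking a nearly balanced complete bipartite graph $K_{A,B}$ and adding edges inside $A$ and inside $B$ so that $K_{A,B}$ is covered by few large cliques, while no clique partition can exploit them.

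Concretely, split $A$ into $s$ blocks and $B$ into $s$ blocks, each of size about $n/(2s)$, and make each block a clique. Then $K_{A,B}$ plus the block edges is covered by the $s^2$ cliques $A_i\cup B_j$, so $\ccn\le s^2$.

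In a clique partition, each block edge lies in at most one clique, and a clique meets a block $A_i$ in $a$ vertices and a block $B_j$ in $b$ vertices. So the total number of cross edges that cliques of size above two can absorb is bounded via the block edges: each clique uses $\binom{a}{2}$ block edges to cover $ab$ cross edges. Summing, the savings over the $|A||B|$ edges is roughly $\sum ab$ minus the number of cliques. Using $\sum_i\binom{|A_i|}{2}\approx n^2/(8s)$ block edges, we get a saving of order $n^2/s$, plus lower-order terms.

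Balancing $s^2$ against $n^2/s$ gives $s=n^{2/3}$, so $\cpn-\ccn\ge n^2/4-O(n^{4/3})$. The inside edges themselves must also be accounted for: they add edges to $\cpn$, which only helps, but the computation must show they contribute at most $O(n^{4/3})$ net loss. In executing this I would check carefully that $\cpn$ truly stays $\ge |A||B|-O(n^{4/3})$, by a weight/counting argument bounding how much larger cliques reduce the count below $|E|$.

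\emph{Upper bound.} Let $G$ have a clique cover of size $c=\ccn(G)$. I would use the weighted Erd\H{o}s--Goodman--P\'osa strengthening (Gy\H{o}ri--Kostochka/Chung/Kahn): $G$ has a clique partition of total weight $\sum(|Q|-1)\le n^2/2$. From this I want to deduce $\cpn(G)\le n^2/4+c-\Omega(n^{4/3})$ whenever $c$ is small, and $\cpn(G)\le n^2/4$ otherwise.

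If $c\ge \varepsilon n^{4/3}$, then $\cpn\le\lfloor n^2/4\rfloor$ by \eqref{eq:EGP}, and we are done. So assume $c<\varepsilon n^{4/3}$. Then the cover cliques $Q_1,\dots,Q_c$ carry many edges. Partition $G$ as follows. For each cover clique, keep large pieces as single cliques; the edges covered twice lie in intersections. Apply the weighted theorem to the graph $H$ of edges not in a chosen disjoint family of large cliques.

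The key inequality needed is that for $G$ with $\cpn(G)$ close to $n^2/4$, $G$ must be close to $K_{n/2,n/2}$ (a stability version of EGP). If so, the cover cliques meet each side in cliques, and these inner cliques have total size forcing $\sum|Q_i\cap A||Q_i\cap B|\approx n^2/4$ with $c$ cliques. Convexity then forces the inner clique edges to total $\gtrsim n^3/c\cdot$const; with $c\le\varepsilon n^{4/3}$ this is $\gtrsim n^{5/3}$, way more than needed. Each clique in these inner edges lets us save: greedily use triangles with one cross edge pair and one inner edge. Each inner edge $xy$ in $A$ with a common neighbor in $B$ saves $1$, yielding $\cpn\le n^2/4-\Omega(n^2/\sqrt{c})$ or similar. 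Combined with $c$, optimizing gives the $n^{4/3}$ term.

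The main obstacle is this upper bound: making the stability plus the greedy-saving step rigorous with the right exponent. Edge-disjoint triangles must be chosen so that the savings scale like $n^2/c^{1/2}$, and the analysis must balance so that the crossover is exactly at $c\asymp n^{4/3}$. I would first try the Kahn weighted bound applied to the graph after deleting a maximum set of edge-disjoint triangles of this type.
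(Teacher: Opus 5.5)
Your lower bound is correct and is the paper's construction $(hK_k)\vee(hK_k)$ with $h=s\approx n^{2/3}$ and $k\approx n^{1/3}$. Your counting closes: a clique meeting one block on each side in $a,b\ge1$ vertices uses $\binom a2+\binom b2$ block edges (not just $\binom a2$), and it satisfies $ab-1\le 2\bigl(\binom a2+\binom b2\bigr)$, which reduces to $(a-1)(b-1)\ge0$. Hence $\cpn\ge|A||B|-2\cdot(\text{number of block edges})$, which is the bound the paper takes from the Erd\H{o}s--Faudree--Ordman lemma. The upper bound, however, has genuine gaps.

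The step meant to produce many inner edges is wrong. Summing $\binom{|Q_i\cap A|}{2}$ over a \emph{cover} counts edges with multiplicity, and the claimed $n^3/c$ is false for distinct edges. Your own graph has $c=s^2\asymp n^{4/3}$ but only $\Theta(n^{4/3})$ inner edges, not $n^{5/3}$. The right bound is $\Omega(n^2/\sqrt c)$, and it needs an idea missing from your sketch. If $I\subseteq A$ and $J\subseteq B$ are independent, each clique contains at most one vertex of each, so it covers at most one $I$--$J$ edge. Hence $\alpha(G[A])\,\alpha(G[B])\le\ccn(G)+\mu$, where $\mu$ is the number of missing cross pairs. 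Caro--Wei on $G[A]$ and $G[B]$, plus AM--GM, then gives $e(G[A])+e(G[B])\ge|A||B|/\sqrt{\ccn(G)+\mu}-n/2$.

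The stability statement you call the key inequality is assumed rather than proved. Moreover, the weighted theorem as you quote it, $\sum(|Q|-1)\le n^2/2$, is too weak to yield it. The paper uses $\sum|Q|\le n^2/2$. With $\delta=n^2/4-\cpn(G)$, a minimum-weight partition then has at most $2\delta$ members of order at least three. Its two-vertex members form a triangle-free graph (three of them could be traded for a triangle) with at least $n^2/4-3\delta$ edges. Mantel stability then gives a cut with $\mu\le6\delta$ and $\bigl||A|-n/2\bigr|\le\sqrt{6\delta}$. It also gives clique partitions of $G[A]$ and $G[B]$ with $O(\delta)$ members, which is needed later.

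Finally, ``greedily use triangles'' does not guarantee edge-disjoint triangles when inner degrees are comparable to $|B|$, and nothing bounds those degrees a priori. You must also pay for the inner edges of the side you do not repack. The paper repacks only the denser side $X$. Vizing splits $G[X]$ into at most $|X|$ matchings, and these are assigned injectively to vertices of $Y$, losing fewer than $m/9$ edges because $|X|\approx|Y|$. Since each class is a matching, the triangles $uvz$ are edge-disjoint, and at most $\mu$ of them are invalid. $G[Y]$ is paid for by its $O(\delta)$-member partition. This yields $e(G[A])+e(G[B])=O(\delta)$. Only then does comparing $n^2/\sqrt D\lesssim e(G[A])+e(G[B])\lesssim D$, with $D=\delta+\ccn(G)$, give $D=\Omega(n^{4/3})$.
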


Thus, $g(n)=\Theta(n^{4/3})$, and Problem~\ref{Prob:1.1} has a negative answer.

We briefly mention related work. Erd\H{o}s, Faudree, and
Ordman~\cite{EFO} studied several other measures of the difference between
clique covers and clique partitions. If $r=cn^a$, where $1/2<a<1$, they proved
that $\cpn(K_n-K_r)\sim c^2n^{2a}$. They also constructed graphs for which
$\cpn(G)/\ccn(G)>n^2/64$, and showed that the covering number of $K_n$ with a
matching removed lies between $\log n-1$ and $2\log n$. Their crossing-edge
inequality will be used to determine the partition number of the construction
in Section~\ref{sec:preliminaries}.

Erd\H{o}s, Ordman, and Zalcstein~\cite{EOZThreshold} considered threshold
coverings and partitions. They bounded the corresponding maximum parameters
between $n-A\sqrt{n\log n}$ and $n-\sqrt n+1$, for an absolute constant $A$,
and constructed planar graphs whose minimum threshold partition has $3/2$
times as many members as a minimum threshold cover. They later studied clique
partitions of chordal graphs~\cite{EOZChordal}, constructing chordal graphs
that require $n^2/6$ cliques and proving an upper bound $(1-c)n^2/4$ for some
$c>0$. For a survey of
graph covering and partitioning problems, see Schwartz~\cite{Schwartz}. Earlier
work on the two clique parameters includes Orlin~\cite{Orlin}, Pullman and
Donald~\cite{PullmanDonald}, and Wallis~\cite{Wallis}.

\medskip

\noindent
{\bf{Outline of the proof.}}
For the lower bound, we consider
$G_{h,k}=(hK_k)\vee(hK_k)$. A one-factorization of $K_k$ and the
crossing-edge inequality determine both clique parameters of $G_{h,k}$.
Choosing $hk$ of order $n$ and balancing the two error terms gives a deficit
of order $n^{4/3}$.

For the upper bound, a minimum-weight clique partition yields a dense
triangle-free spanning subgraph and hence a nearly balanced partition
$A\mathbin{\dot\cup}B$ of the vertex set. If $D$ denotes the deficit and
$L=e(G[A])+e(G[B])$, the covering number gives
$L=\Omega(n^2/\sqrt D)$, while a triangle-repacking argument gives $L=O(D)$.
It follows that $D=\Omega(n^{4/3})$. The proof uses the weighted
Erd\H{o}s--Goodman--P\'osa theorem, the Caro--Wei bound, Vizing's theorem, and
the crossing-edge inequality.

\medskip

\noindent
{\bf Organization.}
In Section~\ref{sec:preliminaries}, we introduce notation and
state the results used later.  In Section~\ref{sec:lower-construction}, we give the lower bound construction and compute its two clique parameters.
In Section~\ref{sec:upper}, we  prove the upper bound and complete the proof
of Theorem~\ref{Thm:Main}.

\section{Preliminaries}\label{sec:preliminaries}

For a graph $G$, let $e(G)=|E(G)|$, let $d_G(v)$ and $N_G(v)$ denote the degree
and open neighborhood of a vertex $v$, and let $\Delta(G)$ and $\alpha(G)$
denote the maximum degree and independence number of $G$, respectively. For
disjoint sets $A,B\subseteq V(G)$, let $e_G(A,B)$ be the number of edges with
one end in $A$ and the other in $B$, and put
\[
 \overline e_G(A,B)=|A||B|-e_G(A,B).
\]
Thus, $\overline e_G(A,B)$ is the number of missing edges between $A$ and $B$.
A clique in a clique cover or partition is allowed to have order two. For a
clique partition $\mathcal Q$, define
\[
 w(\mathcal Q)=\sum_{Q\in\mathcal Q}|Q|.
\]

We use the following weighted form of the Erd\H{o}s--Goodman--P\'osa theorem.

\begin{theorem}[Gy\H{o}ri--Kostochka~\cite{GyoriKostochka},
Chung~\cite{ChungDecomposition}, and Kahn~\cite{Kahn}]
Every graph $G$ on $n$ vertices has a clique partition $\mathcal Q$ such that
\[
 w(\mathcal Q)\leq 2\left\lfloor\frac{n^2}{4}\right\rfloor.
\]
\end{theorem}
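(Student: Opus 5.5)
The plan is to prove a slightly stronger statement: every $n$-vertex graph $G$ has a clique partition into edges and triangles whose weight is at most $2\lfloor n^2/4\rfloor$. Here an edge has weight $2$ and a triangle has weight $3$. If such a partition uses $t$ edge-disjoint triangles, its weight is $2(e(G)-3t)+3t=2e(G)-3t$. So the goal is equivalent to finding $t$ edge-disjoint triangles with
\[
 3t\;\geq\;2e(G)-2\left\lfloor\frac{n^2}{4}\right\rfloor .
\]
This is a triangle-packing statement of Győri type. It is trivial when $e(G)\le\lfloor n^2/4\rfloor$, and it is tight for $K_n$ via Steiner triple systems.

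I would proceed by induction on $n$, peeling off one vertex $v$ at a time. Write $d=d_G(v)$ and let $M$ be a maximum matching in $G[N_G(v)]$. The edges at $v$, together with $M$, are partitioned into the $|M|$ triangles $vxy$ ($xy\in M$) and the $d-2|M|$ remaining edges at $v$. Their total weight is $3|M|+2(d-2|M|)=2d-|M|$. Apply the induction hypothesis to $G'=G-v-M$, which is a graph on $n-1$ vertices. This gives a partition of weight at most $2\lfloor (n-1)^2/4\rfloor$. Since
\[
 2\left\lfloor\frac{n^2}{4}\right\rfloor-2\left\lfloor\frac{(n-1)^2}{4}\right\rfloor=2\left\lfloor\frac n2\right\rfloor,
\]
the induction closes provided some vertex $v$ satisfies
\[
 2d-|M|\le 2\lfloor n/2\rfloor .
\]
If $d\le\lfloor n/2\rfloor$ this holds trivially. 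So the plan is to pick $v$ of minimum degree, and to treat separately the case where every degree exceeds $n/2$.

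In the dense case, the required inequality becomes $\nu(G[N(v)])\ge 2d-2\lfloor n/2\rfloor$, where $\nu$ denotes the matching number. I would obtain this from a Tutte--Berge or Gallai-type argument. Every vertex of $N(v)$ has at least $\delta-(n-d)\ge 2d-n$ neighbours inside $N(v)$ when $d=\delta$. A graph on $d$ vertices with minimum degree $m$ has a matching of size at least $\min(m,\lfloor d/2\rfloor)$. For the minimum-degree vertex this gives $|M|\ge\min(2d-n,\lfloor d/2\rfloor)$. The first term is essentially $2d-2\lfloor n/2\rfloor$, up to a parity correction when $n$ is odd. The second term is adequate because $2d-\lfloor d/2\rfloor\le 2\lfloor n/2\rfloor$ is needed only when $2d-n>d/2$, that is $d>2n/3$. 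In that range I would instead remove a pair of vertices, or use a near-perfect matching together with a second triangle layer.

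I expect the main obstacle to be exactly this dense regime, $d>2n/3$, together with the parity bookkeeping for odd $n$. Removing $M$ from $G-v$ may destroy triangles that an optimal packing of $G-v$ would have used. However, the induction hypothesis applies to \emph{every} graph on $n-1$ vertices, so this costs nothing. The only real issue is therefore the local inequality at $v$. If the single-vertex step fails, I would first try a two-vertex step. For a pair $u,v$, the budget grows by $2\lfloor n^2/4\rfloor-2\lfloor (n-2)^2/4\rfloor=2n-2$. One then partitions the edges at $u$ and $v$ using matchings in $N(u)$ and $N(v)$, plus the triangle $uvw$ when $uv\in E(G)$. The larger budget should absorb the parity losses, following the approach in the proofs of Győri--Kostochka and Chung.
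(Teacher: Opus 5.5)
The paper gives no proof of this theorem; it quotes it from Gy\H{o}ri--Kostochka, Chung and Kahn. Your proposal has a fatal gap at its first step: the ``slightly stronger statement'' that edges and triangles suffice is false. In $K_4$ any two triangles share an edge. So an edge--triangle partition contains at most one triangle and has weight at least $3+2\cdot 3=9>8=2\lfloor 16/4\rfloor$.

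There is also an infinite family. An edge--triangle partition of $K_n$ with $\ell$ edges left as $2$-cliques has weight $\binom n2+\ell$. For even $n$, every vertex has odd degree $n-1$, so the leftover edges form a graph with all degrees odd, and $\ell\ge n/2$. If moreover $n\equiv 4\pmod 6$, then $3\mid\binom n2$ forces $3\mid\ell$, so $\ell\ge n/2+1$ and the weight is at least $n^2/2+1>2\lfloor n^2/4\rfloor$. $K_5$ fails as well: its best weight is $14>12$. Your sanity check was also mistaken: Steiner triple systems exist only for $n\equiv 1,3\pmod 6$, where the weight $\binom n2$ is strictly below the bound. Bounding the edge--triangle weight is a different and much harder problem, raised by Gy\H{o}ri and Tuza, and the exact bound $2\lfloor n^2/4\rfloor$ fails for it, as just shown. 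So no induction can establish your strengthened claim. The trouble you anticipated in the dense regime is not a parity or bookkeeping issue.

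You can also see the failure directly in your step. A matching in $G[N(v)]$ has at most $\lfloor d/2\rfloor$ edges, whereas you need $|M|\ge 2d-2\lfloor n/2\rfloor$. For $d$ well above $2n/3$ the shortfall is linear in $n$: in $K_n$ with $n$ even you need $n-2$ triangles at $v$ but can have at most $(n-2)/2$. A two-vertex step built from two matchings has the same linear deficit.

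The missing idea is that the cliques through $v$ must be allowed to be large. For instance, let $v$ have minimum degree $d$. Each $x\in N(v)$ has at most $n-d-1$ non-neighbours inside $N(v)$. Greedily colouring the complement of $G[N(v)]$ therefore partitions $N(v)$ into $k\le\min\{d,n-d\}$ cliques $Q_1,\dots,Q_k$ of $G$. The cliques $Q_i\cup\{v\}$ have total weight $d+k\le n$. This is exactly the budget $2\lfloor n^2/4\rfloor-2\lfloor (n-1)^2/4\rfloor$ when $n$ is even, so applying the $(n-1)$-vertex bound to $G-v$ with the edges inside the $Q_i$ deleted closes the step into even $n$.

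For odd $n$ the budget is only $n-1$, and this one-vertex step cannot save the extra unit in general. In $K_5$ minus two disjoint edges, every choice of $v$ forces cliques through $v$ of total weight at least $5$, since $N(v)$ is never a clique. A further argument is therefore needed for odd $n$.
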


We shall also use the Caro--Wei bound.

\begin{theorem}[Caro~\cite{Caro} and Wei~\cite{Wei}]
If $G$ has $n$ vertices and $m$ edges, then
\[
 \alpha(G)\geq
 \sum_{v\in V(G)}\frac{1}{d_G(v)+1}
 \geq \frac{n^2}{2m+n}.
\]
Consequently, if $\alpha(G)\leq x$ then
\begin{equation}
 e(G)\geq \frac{n^2}{2x}-\frac n2.
 \label{eq:Caro-Wei-consequence}
\end{equation}
\end{theorem}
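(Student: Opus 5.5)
Since $\alpha(G)\ge\sum_{v}\frac{1}{d_G(v)+1}$ is the substantive part of the statement, I will prove it first and then derive the two consequences by elementary inequalities.

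For the first inequality I use the standard random-ordering argument. Take a uniformly random linear order $\sigma$ of $V(G)$. Let $I_\sigma$ be the set of vertices that precede all of their neighbours in $\sigma$. The set $I_\sigma$ is independent: if $u,v\in I_\sigma$ were adjacent, each would have to precede the other. A vertex $v$ lies in $I_\sigma$ exactly when it comes first among the $d_G(v)+1$ vertices of its closed neighbourhood. Each of these vertices is equally likely to come first, so this event has probability $1/(d_G(v)+1)$. By linearity of expectation,
\[
\mathbb E|I_\sigma|=\sum_{v}\frac{1}{d_G(v)+1},
\]
so some order gives an independent set at least this large.

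The second inequality follows from convexity. The function $t\mapsto 1/(t+1)$ is convex on $t\ge 0$, so Jensen's inequality (equivalently Cauchy--Schwarz, $\sum_v 1/a_v\ge n^2/\sum_v a_v$) with $\sum_v (d_G(v)+1)=2m+n$ gives $\sum_v\frac{1}{d_G(v)+1}\ge \frac{n^2}{2m+n}$.

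For \eqref{eq:Caro-Wei-consequence}, suppose $\alpha(G)\le x$ with $x>0$. The chain above gives $x\ge n^2/(2m+n)$. Since $2m+n>0$, this rearranges to $2m+n\ge n^2/x$, that is, $m\ge \frac{n^2}{2x}-\frac n2$.

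None of these steps is a real obstacle. The only point needing care is that $I_\sigma$ is independent, which holds because no two adjacent vertices can each precede the other.
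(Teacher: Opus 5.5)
Your proof is correct: the random-ordering argument, the Cauchy--Schwarz (Jensen) step, and the rearrangement to $e(G)\geq n^2/(2x)-n/2$ are all sound, and the rearrangement is valid since $x\geq\alpha(G)\geq 1$ whenever $n\geq 1$. The paper does not prove this statement itself but cites it from Caro and Wei, with the consequence being the same one-line rearrangement you give, so your argument is the standard derivation the paper relies on.
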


We use Vizing's
edge-coloring theorem in Section~\ref{sec:upper}.

\begin{theorem}[Vizing~\cite{Vizing}]
Every graph $G$ has a proper edge-coloring with at most $\Delta(G)+1$ colors.
\end{theorem}

Finally, we use the following inequality of Erd\H{o}s, Faudree, and
Ordman~(see \cite[Lemma~4]{EFO}).

\begin{lemma}[Erd\H{o}s--Faudree--Ordman]\label{Lem:EFO}
Let $G$ be a graph with two parts $A$ and $B$ such that $V(G)=A\,\dot\cup\,B$. Suppose that $e(A,B)=s$, $e(A)=a$ and $e(B)=b$. Then
\[
 \cpn(G)\geq s-a-b-\min\{a,b\}.
\]
\end{lemma}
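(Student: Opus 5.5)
The plan is to fix an arbitrary clique partition $\mathcal C$ of $G$, prove a per-clique inequality, and sum it over $\mathcal C$. By the symmetry between $A$ and $B$ we may assume $a\le b$, so $\min\{a,b\}=a$ and the claim becomes $\cpn(G)\ge s-2a-b$.

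For a clique $Q\in\mathcal C$, write $x=|Q\cap A|$ and $y=|Q\cap B|$. The edges of $Q$ are of three kinds:
\begin{itemize}
\item $xy$ crossing edges between $A$ and $B$;
\item $\binom{x}{2}$ edges inside $A$;
\item $\binom{y}{2}$ edges inside $B$.
\end{itemize}
Since the edge sets of the cliques in $\mathcal C$ partition $E(G)$,
\[
 \sum_{Q\in\mathcal C} xy=s,\qquad \sum_{Q\in\mathcal C}\binom{x}{2}=a,\qquad \sum_{Q\in\mathcal C}\binom{y}{2}=b .
\]
It therefore suffices to prove the pointwise inequality
\[
 xy\le 1+2\binom{x}{2}+\binom{y}{2}\qquad\text{for all integers } x,y\ge 0. \tag{$*$}
\]
Summing $(*)$ over $Q\in\mathcal C$ gives $s\le |\mathcal C|+2a+b$. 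Taking $\mathcal C$ of minimum size then yields $\cpn(G)\ge s-2a-b=s-a-b-\min\{a,b\}$.

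To verify $(*)$, rewrite it as a quadratic in $x$:
\[
 x^2-(1+y)x+\frac{y^2-y}{2}+1\ge 0 .
\]
If $x=0$ or $y=0$, the left side of $(*)$ is $0$ and the right side is at least $1$, so there is nothing to prove. Otherwise, the discriminant in $x$ is
\[
 (1+y)^2-2(y^2-y)-4=-(y-1)(y-3),
\]
which is negative for every integer $y\ge 4$, so $(*)$ holds there. The remaining cases $y=1,2,3$ can be checked directly, since the quadratic factors:
\begin{itemize}
\item $y=1$: it equals $(x-1)^2\ge 0$;
\item $y=2$: it equals $(x-1)(x-2)$, which is nonnegative for every integer $x$;
\item $y=3$: it equals $(x-2)^2\ge 0$.
\end{itemize}

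The only real obstacle is that the correction term $\min\{a,b\}$ is a global quantity and cannot be distributed evenly over the cliques. It is handled by the WLOG $a\le b$, which lets us charge the \emph{whole} extra term to the $A$-side edges, giving the asymmetric weights $2\binom{x}{2}$ and $\binom{y}{2}$ in $(*)$. The integrality of $x$ and $y$ is essential at $y=2$: there the quadratic $(x-1)(x-2)$ is negative strictly between $1$ and $2$, but it has no integer roots in that open interval.
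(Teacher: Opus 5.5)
The paper gives no proof of this lemma; it quotes it from Erd\H{o}s, Faudree, and Ordman (their Lemma~4), so there is no argument in the paper to compare yours with. Your proof is correct and self-contained.

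The per-clique inequality $(*)$ is verified correctly. The discriminant $-(y-1)(y-3)$ and the factorizations for $y=1,2,3$ are right, and cliques with $x=0$ or $y=0$, including one-vertex members, are handled by the trivial case. The key point is that $(*)$ holds for every pair of nonnegative integers, with no assumption on whether $x\le y$. Summing $(*)$ gives $|\mathcal C|\ge s-2a-b$, and summing its mirror image $xy\le 1+\binom{x}{2}+2\binom{y}{2}$ gives $|\mathcal C|\ge s-a-2b$. Together these are exactly $s-a-b-\min\{a,b\}$, which justifies your global reduction to $a\le b$.

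One remark you could add: equality in $(*)$ holds precisely for $(x,y)\in\{(1,1),(1,2),(2,2),(2,3)\}$. The partition built in Proposition~3.1 uses only crossing edges, of type $(1,1)$, and copies of $K_4$ with two vertices on each side, of type $(2,2)$. With $a=b$ there, this explains why that construction attains the lemma's bound exactly.
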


\section{The lower construction}
\label{sec:lower-construction}

For positive integers $h$ and $k$, let $hK_k$ denote the disjoint union of
$h$ copies of $K_k$. The join $G_1\vee G_2$ is obtained from disjoint copies of
$G_1$ and $G_2$ by adding all edges between them. Define
\[
 G_{h,k}=(hK_k)\vee(hK_k).
\]
Write
\[
 A=A_0\mathbin{\dot\cup}\cdots\mathbin{\dot\cup}A_{h-1},
 \qquad
 B=B_0\mathbin{\dot\cup}\cdots\mathbin{\dot\cup}B_{h-1},
\]
where each $A_i$ and each $B_j$ induces a copy of $K_k$. There are no edges
between distinct parts on the same side, and every vertex of $A$ is
adjacent to every vertex of $B$. In the following, we say each $A_i$ or $B_j$ is a part in $A\cup B$.

For each even $k$, both clique parameters of $G_{h,k}$ can be determined exactly.

\begin{proposition}
If $k$ is even and $h\geq k-1$, then
\[
 \ccn(G_{h,k})=h^2
 \quad\text{and}\quad
 \cpn(G_{h,k})=h^2k^2-\frac32hk(k-1).
\]
\end{proposition}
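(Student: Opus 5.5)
For the covering number, I would first argue the upper bound $\ccn(G_{h,k})\le h^2$ by taking the $h^2$ cliques $A_i\cup B_j$, $0\le i,j\le h-1$. Each is a clique of order $2k$, since $A_i$ and $B_j$ are cliques and all cross edges are present. Every edge inside some $A_i$, inside some $B_j$, or between $A_i$ and $B_j$ lies in one of them. For the lower bound, note that any clique of $G_{h,k}$ meets at most one part on each side, because distinct parts on the same side are nonadjacent. Hence a clique contains cross edges only between a single pair $(A_i,B_j)$. Since each of the $h^2$ pairs $(A_i,B_j)$ carries at least one cross edge, at least $h^2$ cliques are needed. This gives $\ccn(G_{h,k})=h^2$; the hypothesis $h\ge k-1$ is not used here.

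For the lower bound on $\cpn$, I would apply Lemma~\ref{Lem:EFO} with the parts $A$ and $B$. Here
\[
 s=h^2k^2,\qquad a=b=h\binom k2,
\]
so
\[
 \cpn(G_{h,k})\ge h^2k^2-3h\binom k2=h^2k^2-\tfrac32hk(k-1).
\]

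For the matching upper bound, I need a partition achieving equality. Each clique should contain exactly one edge inside $A$ and one inside $B$, and so be a $K_4$ absorbing four cross edges. Counting edges confirms this is the right target. Using $h\binom k2$ such $K_4$'s together with single cross edges gives
\[
 h\binom k2+\Bigl(h^2k^2-4h\binom k2\Bigr)=h^2k^2-3h\binom k2
\]
cliques, exactly the bound. The $K_4$'s must be edge-disjoint, which requires that no cross edge $xy$ lie in two of them. Equivalently, I must pair each edge of $A$ with an edge of $B$ so that any two paired couples $(e,f)$ and $(e',f')$ with $e\cap e'\ne\emptyset$ have $f\cap f'=\emptyset$.

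This is where $k$ even and $h\ge k-1$ enter. I would fix a one-factorization of $K_k$ into perfect matchings $M_1,\dots,M_{k-1}$ and use it on every part $A_i$ and $B_j$. I would then assign to the edges of $A_i$ lying in $M_c$ the part $B_{\sigma_i(c)}$, and pair them bijectively with the edges of $M_c$ in $B_{\sigma_i(c)}$, where the maps $\sigma_i$ are injective. Two edges of $A_i$ sharing a vertex lie in different color classes, so they go to different parts $B_j$, and their partner edges are vertex-disjoint. The remaining requirement is that each edge of $B$ be used at most once, so that the $K_4$'s never reuse a $B$-edge. I would arrange this with a Latin-square-type assignment $\sigma_i(c)=i+c \pmod h$: the class $M_c$ of $B_j$ is then used only by $A_{j-c}$, which is a single part. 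This needs $k-1\le h$ so that the colors $c$ map injectively into $\mathbb Z_h$.

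Edges of $B_j$ and edges of $A$ are then each used at most once, which is consistent with every $A$-edge being used exactly once (the counts $a=b$ match). The remaining cross edges are taken as single-edge cliques. I would also check that a cross edge $xy$ with $x\in A_i$ and $y\in B_j$ is covered by at most one $K_4$. The $K_4$'s through $x$ in $B_j$ correspond to distinct colors $c$ with $\sigma_i(c)=j$, and there is only one such $c$, so a single $K_4$ contains $x$ and meets $B_j$. The main obstacle is precisely this disjointness bookkeeping, and the cyclic assignment is what I would try first.
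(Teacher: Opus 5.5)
Your proposal is correct and follows the paper's proof essentially step for step. It uses the same $h^2$ cliques $A_i\cup B_j$ together with the observation that a clique covers crossing edges of only one part pair (giving $\ccn$), Lemma~\ref{Lem:EFO} on the cut $(A,B)$ for the lower bound on $\cpn$, and the same cyclic pairing of the $c$-th one-factor of $A_i$ with the $c$-th one-factor of $B_{i+c}$ (indices mod $h$), with $h\geq k-1$ making $c\mapsto i+c$ injective so that no crossing edge lies in two of the $K_4$'s.
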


\begin{proof}
The $h^2$ sets $A_i\cup B_j$ induce cliques (and are all maximal cliques in $G_{h,k}$). Hence, the sets form a clique cover, so
$\ccn(G_{h,k})\leq h^2$. For the reverse inequality, choose $a_i\in A_i$ and
$b_j\in B_j$ for $0\leq i,j<h$. These vertices $\{a_i,b_j:0\leq i,j<h\}$ induce $K_{h,h}$. Every clique
of $G_{h,k}$ contains at most one selected vertex in $A$ and at most one
selected vertex in $B$, as any vertex $a_i\in A_i$ is non-adjacent to any vertex $a_j\in A_j$ where $0\leq i,j\leq h-1$ and $i\neq j$ and so does $B_i$ and $B_j$. Therefore, each clique covers at most one edge of this
$K_{h,h}$. Hence $\ccn(G_{h,k})\geq h^2$.

Next, we determine the partition number of $G_{h,k}$. 
There are $h^2k^2$ edges in $e(A,B)$ and $h\binom{k}{2}$ edges within each side $A$ and $B$. Applying Lemma~\ref{Lem:EFO} to the cut $(A,B)$,
we have
\begin{equation}
 \cpn(G_{h,k})
 \geq h^2k^2-3h\binom{k}{2}
 =h^2k^2-\frac32hk(k-1).
 \label{eq:construction-lower}
\end{equation}

It remains to construct a partition attaining equality in
\eqref{eq:construction-lower}. Since $k$ is even, $K_k$ has a
one-factorization~(see \cite[Section~3.3]{Stinson}). Label the $k-1$ one-factors by
$0,\ldots,k-2$, using the same labeling in every $A_i$ and every $B_j$.
The indices of parts $A_i$ and $B_j$ are taken modulo $h$.

For each $0\leq i<h$ and $0\leq r\leq k-2$, pair the $k/2$ edges in the $r$-th 1-factor
of $A_i$ bijectively with the $k/2$ edges in the $r$-th 1-factor of $B_{i+r}$.
For each paired pair of edges, place in the partition the $K_4$ induced by
their four ends. Since $h\geq k-1$, the part pairs $(A_i,B_{i+r})$ are
distinct as $r$ varies.

Every edge within $A$ now lies in exactly one of these copies of $K_4$. The
same is true within $B$: an edge in factor $r$ of $B_j$ is paired with an edge
of $A_{j-r}$. No crossing edge is repeated. Indeed, within a fixed part
pair the factor edges are matchings, so the corresponding copies of $K_4$ are
vertex-disjoint, and distinct pairs $(i,r)$ correspond to distinct ordered
part pairs.

There are
\[
 h\binom{k}{2}=\frac12hk(k-1)
\]
copies of $K_4$, and together they contain
\[
 4h\binom{k}{2}=2hk(k-1)
\]
crossing edges from $E(A,B)$. Use every remaining crossing edge as a two-vertex clique. The
resulting clique partition has
\[
 h\binom{k}{2}
 +\left(h^2k^2-4h\binom{k}{2}\right)
 =h^2k^2-3h\binom{k}{2}
 =h^2k^2-\frac32hk(k-1)
\]
members. Together with \eqref{eq:construction-lower}, this proves the result.
\end{proof}

Balancing the two error terms gives the lower bound in Theorem~1.2.

\begin{corollary}
There is an absolute constant $C>0$ such that
\[
 f(n)\geq \left\lfloor\frac{n^2}{4}\right\rfloor-Cn^{4/3}
\]
for every sufficiently large integer $n$.
\end{corollary}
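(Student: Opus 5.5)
We will use the graphs $G_{h,k}$ of the Proposition, padded with isolated vertices to reach exactly $n$ vertices. Adding isolated vertices changes neither $\cpn$ nor $\ccn$. For $k$ even and $h\geq k-1$, the Proposition gives
\[
 \cpn(G_{h,k})-\ccn(G_{h,k}) = h^2k^2-\tfrac32hk(k-1)-h^2 .
\]
We need $2hk\le n$, and we want $hk$ as close to $n/2$ as possible. The deficit relative to $\lfloor n^2/4\rfloor$ then splits into three terms: the rounding loss $\lfloor n^2/4\rfloor-h^2k^2$, the term $\tfrac32hk(k-1)\approx \tfrac34 nk$, and the term $h^2\approx n^2/(4k^2)$. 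Balancing $nk$ against $n^2/k^2$ suggests taking $k\asymp n^{1/3}$, which makes both terms of order $n^{4/3}$.

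Concretely, let $k$ be an even integer with $k=\Theta(n^{1/3})$; for instance, $k$ is $\lfloor n^{1/3}\rfloor$ or $\lfloor n^{1/3}\rfloor+1$, whichever is even. Set $h=\lfloor n/(2k)\rfloor$. Then $h=\Theta(n^{2/3})$, so $h\ge k-1$ for $n$ large, and the Proposition applies. Since $2hk\le n$, the graph fits on $n$ vertices. Also $hk\ge n/2-k$, so
\[
 h^2k^2\ge \frac{n^2}{4}-nk \ge \left\lfloor\frac{n^2}{4}\right\rfloor - O(n^{4/3}).
\]

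Next we bound the other two terms. We have $\tfrac32hk(k-1)\le \tfrac32\cdot\tfrac n2\cdot k=O(n^{4/3})$. We also have $h^2\le n^2/(4k^2)=O(n^{4/3})$ because $k\ge n^{1/3}-1$. Combining the three estimates gives $f(n)\ge \lfloor n^2/4\rfloor-Cn^{4/3}$ for a suitable absolute constant $C$ and all sufficiently large $n$.

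No step is a real obstacle. The only care needed is to check the hypotheses of the Proposition ($k$ even, $h\ge k-1$) and to control the rounding loss between $2hk$ and $n$; that loss costs only $O(nk)=O(n^{4/3})$, which is the same order as the main terms.
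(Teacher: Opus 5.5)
Your proposal is correct and follows essentially the same route as the paper: pad $G_{h,k}$ with isolated vertices, take an even $k\asymp n^{1/3}$ and $h=\lfloor n/(2k)\rfloor$, bound the rounding loss between $2hk$ and $n$ by $O(nk)$, and bound $\tfrac32hk(k-1)$ and $h^2$ by $O(n^{4/3})$. The only difference is how you pick the even $k$ (the paper takes the least even integer $\geq n^{1/3}$), and this does not affect the argument.
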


\begin{proof}
Let $k$ be the least even integer with $k\geq n^{1/3}$, and put
$h=\lfloor n/(2k)\rfloor$. Then, we have
\[
 n^{1/3}\leq k<n^{1/3}+2,
\]
and $h\geq k-1$ for all sufficiently large $n$. Add $n-2hk$ isolated vertices
to $G_{h,k}$; this changes neither clique parameter. Proposition~3.1 gives
\[
 \cpn(G_{h,k})-\ccn(G_{h,k})
 =h^2k^2-\frac32hk(k-1)-h^2
 =\frac{(2hk)^2}{4}-O(hk^2+h^2).
\]
Now $hk=O(n)$, $k=O(n^{1/3})$, and $h=O(n^{2/3})$, so
$hk^2+h^2=O(n^{4/3})$. Moreover, $0\leq n-2hk<2k$, and hence
\[
 0\leq \frac{n^2-(2hk)^2}{4}
 =\frac{(n-2hk)(n+2hk)}{4}
 =O(nk)=O(n^{4/3}).
\]
The result follows after adjusting the absolute constant to account for the
floor.
\end{proof}

\section{The upper bound}
\label{sec:upper}

We first consider graphs of even order. Throughout this section, let $G$ be a
graph of even order $n$, and write
\[
 p=\cpn(G),\qquad t=\ccn(G),\qquad
 \delta=\frac{n^2}{4}-p,\qquad D=\delta+t.
\]
By \eqref{eq:EGP}, $\delta\geq0$, and
\[
 \cpn(G)-\ccn(G)=\frac{n^2}{4}-D.
\]

\begin{proposition}
There is an absolute constant $c_0>0$ such that every graph $G$ of even order
$n$ satisfies
\[
 \cpn(G)-\ccn(G)\leq \frac{n^2}{4}-c_0n^{4/3}
\]
whenever $n$ is sufficiently large.
\end{proposition}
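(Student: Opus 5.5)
We argue by contradiction in spirit: set $D=\delta+t$ and aim to show $D\geq c_0n^{4/3}$. We may assume $D\leq n^{4/3}$; otherwise there is nothing to prove. The plan follows the outline: find a nearly balanced cut $A\mathbin{\dot\cup}B$ whose inside edge count $L=e(G[A])+e(G[B])$ satisfies both $L=\Omega(n^2/\sqrt D)$ and $L=O(D)$. Together these give $D^{3/2}=\Omega(n^2)$, that is, $D=\Omega(n^{4/3})$.

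\emph{Step 1 (structure from near-extremality).} We take a clique partition $\mathcal Q$ with $w(\mathcal Q)\leq n^2/2$ from the weighted Erd\H{o}s--Goodman--P\'osa theorem. We may assume it consists of edges and triangles, since splitting larger cliques into edges and triangles does not raise the weight beyond what is needed. Because $|\mathcal Q|\geq p=n^2/4-\delta$ and $w(\mathcal Q)=2|\mathcal Q|+(\text{number of triangles})$, the number of triangle members is at most $2\delta$. Removing one edge from each triangle gives a subgraph $H$ with $e(H)\geq e(G)-2\delta$. Counting gives $e(G)\geq p\geq n^2/4-\delta$. We then make $H$ triangle-free by discarding at most $O(\delta)$ further edges; this needs an argument, perhaps a minimal-weight choice of $\mathcal Q$ so that any triangle of the edge-part could be merged to lower the weight. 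Stability for triangle-free graphs with $n^2/4-O(\delta)$ edges, or a direct Füredi-type statement, then yields a bipartition $A\mathbin{\dot\cup}B$ with $|A|,|B|=n/2\pm O(\sqrt{\delta})$ and $\overline e_G(A,B)=O(\delta)$.

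\emph{Step 2 ($L=O(D)$ by repacking).} Lemma~\ref{Lem:EFO} gives $p\geq e(A,B)-L-\min\{a,b\}$. Since $e(A,B)=|A||B|-\overline e(A,B)$, this yields only a lower bound on $p$, which is the wrong direction. Instead we use the upper bound $p\leq$ (an explicit partition) and build a cheap partition. We use all crossing edges, plus triangles formed by an inside edge $xy\subseteq A$ and a vertex $z\in B$ adjacent to both; each such triangle saves one clique. We properly edge-colour $G[A]$ and $G[B]$ by Vizing's theorem. Each colour class is a matching, and we assign its edges to distinct vertices on the other side, greedily avoiding reuse of crossing edges. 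Because crossing non-edges are only $O(\delta)$, most inside edges can be absorbed into triangles. This gives roughly $p\leq |A||B|-\Omega(L)+O(\delta)$, so $n^2/4-\delta\leq n^2/4-cL+O(\delta)$, which gives $L=O(\delta)=O(D)$. This step is where I expect the main obstacle. The bound requires $\Delta(G[A])$ small compared with $|B|$ so that colour classes fit, and it requires controlling collisions among crossing edges. I would first discard high-degree vertices, whose number is bounded via $L$, and use the $\Delta+1$ colour classes each matched into $B$ with a shifted assignment, as in the construction of Section~\ref{sec:lower-construction}.

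\emph{Step 3 ($L=\Omega(n^2/\sqrt D)$ via covering).} Cover $G$ with $t$ cliques. Each clique meets $A$ and $B$ in cliques of $G[A]$ and $G[B]$, so it covers at most $\alpha$-limited crossing structure. We pick an independent set $I_A$ in $G[A]$ and $I_B$ in $G[B]$. The crossing edges between them are $\geq |I_A||I_B|-O(\delta)$, and each clique covers at most one of them, so $t\geq |I_A||I_B|-O(\delta)$. Hence $\alpha(G[A])\alpha(G[B])\leq t+O(\delta)=O(D)$. If both independence numbers are at most $x=O(\sqrt D)$, then Caro--Wei in the form \eqref{eq:Caro-Wei-consequence} gives $L\geq n^2/(4x)-O(n)=\Omega(n^2/\sqrt D)$. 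If instead one is large, say $\alpha(G[A])\geq\sqrt D$, then the other is $O(\sqrt D)$, and Caro--Wei on $B$ alone already gives $L=\Omega(n^2/\sqrt D)$. Combining this with Step 2 gives $D\geq c_0n^{4/3}$.
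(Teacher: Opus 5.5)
\textbf{Verdict: Step~2 has a genuine gap, and it is the one you flag yourself.} Steps~1 and~3 are essentially sound, apart from one false claim in Step~1 that is easy to repair.

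\emph{Why Step~2 does not close.} Suppose $T$ edge-disjoint triangles each absorb one internal edge and two crossing edges. The resulting partition has $e_G(A,B)+L-2T$ members. Comparing with $p=n^2/4-\delta$ yields $L=O(\delta)$ only if $T\geq\frac{1+c}{2}L-O(\delta)$, so you must absorb clearly more than half of \emph{all} internal edges.

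Repacking one side while leaving the other side's edges as two-vertex cliques cannot achieve this; at best it bounds $m_X-m_Y$, not $L$. So your scheme must repack $G[A]$ and $G[B]$ simultaneously, and then collisions arise. A crossing edge $uz$ with $u\in A$, $z\in B$ is demanded by an $A$-triangle whenever $z$ is the apex of a colour class containing an edge at $u$. It is also demanded by a $B$-triangle whenever $u$ is the apex of a colour class containing an edge at $z$. For a random assignment, the expected number of such conflicts is about $4e(G[A])e(G[B])/(|A||B|)$. This is of the same order as $L$ when both sides are dense, and nothing in your argument excludes that regime beforehand. The ``shifted assignment'' is not specified well enough to say otherwise.

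Discarding high-degree vertices does not address this problem. It is also not needed to fit the colour classes. Vizing gives at most $\Delta(G[X])+1\leq|X|$ colours, and keeping the $|Y|$ largest classes loses at most a $\bigl||X|-|Y|\bigr|/|X|=O(\sqrt D/n)$ fraction of the edges.

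\emph{The missing idea.} Step~1, done correctly, already gives cheap clique partitions of both sides, so only one side needs repacking. Take $\mathcal Q$ of \emph{minimum} weight.
\begin{itemize}
\item You cannot assume $\mathcal Q$ consists of edges and triangles. Splitting a $K_4$ raises its weight from $4$ to at least $9$, which destroys $w(\mathcal Q)\leq n^2/2$.
\item You do not need that assumption. Let $r$ count the members of order at least three. Then $w(\mathcal Q)\geq2|\mathcal Q|+r$ gives $r\leq2\delta$.
\item Let $H$ be the two-vertex members; then $e(H)\geq n^2/4-3\delta$. $H$ is triangle-free, because three two-vertex members spanning a triangle could be replaced by the triangle, lowering the weight from $6$ to $3$.
\item Choose the stability cut with $A$ independent in $H$ and $e(H[B])\leq3\delta$. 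Restricting $\mathcal Q$ to $A$ keeps only the $r$ large members. Restricting it to $B$ keeps those plus the edges of $H[B]$. Hence $G[A]$ and $G[B]$ have clique partitions with at most $5\delta$ members each.
\end{itemize}

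Now repack only the side $X$ carrying $m=\max\{e(G[A]),e(G[B])\}$ internal edges, assigning its colour classes injectively to vertices of $Y$.
\begin{itemize}
\item The valid candidate triangles are automatically edge-disjoint, since a crossing edge $wz$ can be used only by the matching assigned to $z$.
\item At most $\mu\leq6\delta$ candidates are invalid.
\item $G[Y]$ costs at most $5\delta$ cliques.
\end{itemize}
This gives $p\leq n^2/4-m+2\ell+2\mu+5\delta$ with $\ell<m/9$. Hence $m<36\delta$ and $L\leq72\delta$, with no collision issue at all; this is Lemmas~4.4 and~4.6 of the paper.

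\emph{Step~3.} This is essentially Lemma~4.5, with your case split replacing the AM--GM step, and it is fine. When you combine it with $L=O(D)$, you still need to dispose of the $-O(n)$ term. For example, note that $D<n/144$ is incompatible with both bounds for large $n$.
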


The first lemma extracts a dense triangle-free graph from a minimum-weight
clique partition.

\begin{lemma}\label{Lem:4.2}
The graph $G$ has a triangle-free spanning subgraph $H$ such that
\[
 e(H)\geq \frac{n^2}{4}-3\delta.
\]
\end{lemma}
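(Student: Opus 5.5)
Take a clique partition $\mathcal Q$ of $G$ with $w(\mathcal Q)\le n^2/2$, as provided by the weighted Erd\H{o}s--Goodman--P\'osa theorem; here $n$ is even, so $2\lfloor n^2/4\rfloor=n^2/2$. Write $q_j$ for the number of members of $\mathcal Q$ of order $j$. Since $\mathcal Q$ is a clique partition, $|\mathcal Q|\ge p=n^2/4-\delta$. Summing, we get
\[
 \sum_j (j-2)\,q_j \;=\; w(\mathcal Q)-2|\mathcal Q|\;\le\; \frac{n^2}{2}-2\Bigl(\frac{n^2}{4}-\delta\Bigr)\;=\;2\delta .
\]
So the cliques of order at least $3$ are few and small in total excess weight. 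Explicitly, $\sum_{j\ge3}(j-2)q_j\le 2\delta$.

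Now build $H$ by taking every member of $\mathcal Q$ that is a single edge, and from each member $Q$ of order $j\ge3$ taking a spanning star of $Q$, i.e.\ $j-1$ edges through one chosen vertex. The edges coming from distinct members are distinct, since $\mathcal Q$ is a partition.

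The count is then routine. We have $e(H)=q_2+\sum_{j\ge3}(j-1)q_j\ge |\mathcal Q|\ge n^2/4-\delta$. So $H$ has the required density, with room to spare.

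The main obstacle is triangle-freeness. Edges chosen from different cliques can still form a triangle: the three edges $xy,yz,zx$ of $G$ may lie in three distinct members of $\mathcal Q$. Stars alone do not prevent this.

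To handle it, I would switch to a different construction: delete few edges from $G$ itself to destroy all triangles. The cleanest route is to take a maximum cut of $G$, or rather to count directly. Set $H=$ the set of edges lying in members of order $2$ in $\mathcal Q$, and note that $q_2\ge |\mathcal Q|-\sum_{j\ge3}q_j\ge n^2/4-\delta-2\delta=n^2/4-3\delta$; this matches the constant $3$ in the statement. It then remains to show that the $2$-vertex members can be chosen to span no triangle.

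For that I would use minimality. Among all clique partitions with $w\le n^2/2$ (equivalently, of minimum weight), choose one with minimum weight. Suppose three members of order $2$, namely $xy,yz,zx$, form a triangle. Replacing them by the single triangle $\{x,y,z\}$ keeps the partition property and changes the weight from $6$ to $3$, contradicting minimality. Hence the order-$2$ members form a triangle-free spanning subgraph $H$ with $e(H)=q_2\ge n^2/4-3\delta$.

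To organise the proof, I would first fix $\mathcal Q$ of minimum weight, which is at most $n^2/2$. Second, I would derive the excess inequality above. Third, I would apply the triangle-replacement argument. The only delicate point is ensuring minimum weight is compatible with $|\mathcal Q|\ge p$, which is automatic since every clique partition has at least $p$ members.
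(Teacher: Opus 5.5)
Your final argument is the paper's proof. You take a minimum-weight clique partition with $w(\mathcal Q)\le n^2/2$, use the excess bound to get at most $2\delta$ members of order at least $3$, let $H$ consist of the two-vertex members, and use the triangle replacement $6\to3$ to show $H$ is triangle-free. The abandoned star construction and the max-cut remark can simply be deleted. You should state explicitly that minimality rules out one-vertex members, since you use $q_1=0$ both to drop the $j=1$ term in $\sum_j(j-2)q_j$ and to write $q_2=|\mathcal Q|-\sum_{j\ge3}q_j$.
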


\begin{proof}
Choose a clique partition $\mathcal Q$ of minimum weight. We may assume that
$\mathcal Q$ has no one-vertex member. By Theorem~2.1,
$w(\mathcal Q)\leq n^2/2$. Let $s=|\mathcal Q|$, and let $r$ be the number of
members of $\mathcal Q$ having order at least three. Since
$s\geq p=n^2/4-\delta$,
\[
 \frac{n^2}{2}\geq w(\mathcal Q)\geq 2s+r
 \geq 2\left(\frac{n^2}{4}-\delta\right)+r.
\]
Therefore
\begin{equation}
 r\leq2\delta.
 \label{eq:number-large-cliques}
\end{equation}

Let $H$ consist of the edges that occur as two-vertex members of $\mathcal Q$.
Since $\mathcal Q$ partitions $E(G)$,
\[
 e(H)=s-r\geq \frac{n^2}{4}-\delta-2\delta
 =\frac{n^2}{4}-3\delta.
\]
If $H$ contained a triangle, the three corresponding two-vertex members of
$\mathcal Q$ could be replaced by that triangle, decreasing their total
weight from $6$ to $3$. This contradicts the choice of $\mathcal Q$.
\end{proof}

We need the following stability estimate of Mantel's theorem.

\begin{lemma}
Let $H$ be a triangle-free graph on $n$ vertices, and let
$e(H)=n^2/4-\Delta$. There is a partition
$V(H)=A\mathbin{\dot\cup}B$ such that $A$ is independent in $H$ and
\[
 e(H[B])\leq\Delta,\qquad
 \overline e_H(A,B)\leq2\Delta,
 \qquad
 \left||A|-\frac n2\right|\leq\sqrt{2\Delta}.
\]
\end{lemma}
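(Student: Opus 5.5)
Let $v$ be a vertex of maximum degree in $H$ and put $A=N_H(v)$, $B=V(H)\setminus A$. Since $H$ is triangle-free, $A$ is independent, so every edge of $H$ has at least one end in $B$. I will show that this choice satisfies all three bounds. Write $a=|A|=\Delta(H)$ and $b=|B|=n-a$.

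The key counting step bounds the edges by the degrees in $B$. Each edge of $H$ meets $B$, and edges inside $B$ are counted twice, so
\[
 \sum_{u\in B} d_H(u)=e_H(A,B)+2e(H[B])=e(H)+e(H[B]).
\]
Since each $d_H(u)\le a$, this gives
\[
 e(H)+e(H[B])\le ab.
\]
Next, $e(H)=e_H(A,B)+e(H[B])$ and $e_H(A,B)=ab-\overline e_H(A,B)$. Combining with the previous display yields
\[
 2e(H)\le ab+e_H(A,B)=2ab-\overline e_H(A,B).
\]
Also $ab\le n^2/4$, with $n^2/4-ab=(a-n/2)^2$. Substituting $e(H)=n^2/4-\Delta$ gives
\[
 \frac{n^2}{2}-2\Delta\le \frac{n^2}{2}-2\Bigl(a-\frac n2\Bigr)^2-\overline e_H(A,B).
\]
This single inequality delivers two of the bounds:
\[
 2\Bigl(a-\tfrac n2\Bigr)^2+\overline e_H(A,B)\le 2\Delta .
\]
Hence $\overline e_H(A,B)\le2\Delta$ and $|a-n/2|\le\sqrt{\Delta}\le\sqrt{2\Delta}$.

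For the bound on $e(H[B])$, I use $e(H)+e(H[B])\le ab\le n^2/4$ directly. Rearranging, $e(H[B])\le n^2/4-e(H)=\Delta$.

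I do not expect a genuine obstacle. The only point needing care is choosing $A$ as the neighbourhood of a \emph{maximum}-degree vertex: this choice is what makes the degree sum over $B$ controllable by $ab$. The remaining work is bookkeeping. Note that $\Delta$ here is the deficit, not the maximum degree; I write $\Delta(H)$ for the maximum degree to keep the two apart. If $\Delta<0$, Mantel's theorem rules this out, so we may assume $\Delta\ge0$.
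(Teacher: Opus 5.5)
Your proof is correct, but it chooses the vertex $v$ differently from the paper. The paper does not use a maximum-degree vertex. It double counts, using triangle-freeness in the form $N_H(x)\cap N_H(y)=\varnothing$ for $xy\in E(H)$, to get $\sum_{v}e(H[V(H)\setminus N_H(v)])=ne(H)-\sum_v d_H(v)^2$. It bounds this by $n\Delta$ via Cauchy--Schwarz and picks a $v$ whose non-neighbourhood $B_v$ spans at most $\Delta$ edges. The other two bounds then come from the exact identity $\overline e_H(A,B)=\Delta+e(H[B])-(|A|-n/2)^2$, valid for any independent $A$. Plugging in only $e(H[B])\le\Delta$ is what produces the factor $2$ in $2\Delta$ and $\sqrt{2\Delta}$.

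Your argument is the classical maximum-degree proof of Mantel's theorem, with the term $e(H[B])$ retained instead of discarded. The single inequality $\sum_{u\in B}d_H(u)=e(H)+e(H[B])\le|A||B|$ gives all three bounds at once, with no averaging or Cauchy--Schwarz. It also gives them in a sharper form: $2\bigl(|A|-n/2\bigr)^2+\overline e_H(A,B)\le2\Delta$, so $\bigl||A|-n/2\bigr|\le\sqrt{\Delta}$, and $e(H[B])\le\Delta-\bigl(|A|-n/2\bigr)^2$. For this lemma the paper's averaging buys nothing your choice does not. Your route is shorter and has slightly better constants, which would only improve the constants downstream in Lemma~4.4 (for instance $\bigl||A|-n/2\bigr|\le\sqrt{3\delta}$ instead of $\sqrt{6\delta}$).
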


\begin{proof}
Mantel's theorem~\cite{Mantel} gives $\Delta\geq0$. For $v\in V(H)$, put
$B_v=V(H)\setminus N_H(v)$. If $xy\in E(H)$, then
$N_H(x)\cap N_H(y)=\varnothing$. Hence
\[
 \sum_{v\in V(H)}e(H[B_v])
 =\sum_{xy\in E(H)}\bigl(n-d_H(x)-d_H(y)\bigr)
 =ne(H)-\sum_{v\in V(H)}d_H(v)^2.
\]
By the Cauchy--Schwarz inequality,
\[
 \sum_{v\in V(H)}d_H(v)^2
 \geq\frac{(2e(H))^2}{n}
 =\frac{4e(H)^2}{n}.
\]
It follows that
\[
 \sum_{v\in V(H)}e(H[B_v])
 \leq\frac{4e(H)(n^2/4-e(H))}{n}
 =\frac{4e(H)\Delta}{n}
 \leq n\Delta.
\]
Choose $v$ such that $e(H[B_v])\leq\Delta$, and set
$A=N_H(v)$ and $B=B_v$. Then $A$ is independent. Put
$a=|A|$, $b=e(H[B])$, and $\mu_H=\overline e_H(A,B)$. Since every edge of $H$
either crosses the cut or lies in $B$,
\[
 \mu_H=a(n-a)-(e(H)-b)
 =\Delta+b-\left(a-\frac n2\right)^2.
\]
As $0\leq b\leq\Delta$ and $\mu_H\geq0$, we obtain
\[
 \left(a-\frac n2\right)^2\leq\Delta+b\leq2\Delta
 \quad\text{and}\quad
 \mu_H\leq\Delta+b\leq2\Delta.
\]
\end{proof}

The same cut also controls the edges of $G$.

\begin{lemma}
There is a partition $V(G)=A\mathbin{\dot\cup}B$ such that, with
$\mu=\overline e_G(A,B)$,
\[
 \mu\leq6\delta,\qquad
 \left||A|-\frac n2\right|\leq\sqrt{6\delta},
\]
and each of $G[A]$ and $G[B]$ has a clique partition with at most $5\delta$
members.
\end{lemma}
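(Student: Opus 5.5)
We combine Lemma~\ref{Lem:4.2} with the stability lemma. Take the minimum-weight clique partition $\mathcal Q$ from the proof of Lemma~\ref{Lem:4.2} and let $H\subseteq G$ be the triangle-free graph of its two-vertex members. Then $e(H)\ge n^2/4-3\delta$, so applying the stability lemma to $H$ with $\Delta=n^2/4-e(H)\le 3\delta$ produces a partition $V=A\mathbin{\dot\cup}B$ with $A$ independent in $H$. It also gives $e(H[B])\le\Delta\le 3\delta$, $\overline e_H(A,B)\le 2\Delta\le 6\delta$, and $\bigl||A|-n/2\bigr|\le\sqrt{2\Delta}\le\sqrt{6\delta}$. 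Since $H\subseteq G$ and both graphs are on the same vertex set, every missing crossing edge of $G$ is also missing in $H$. Hence $\mu=\overline e_G(A,B)\le\overline e_H(A,B)\le 6\delta$, and the balance condition carries over verbatim. Both of the first two claims therefore follow directly.

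For the clique partitions of $G[A]$ and $G[B]$, we restrict $\mathcal Q$ to $A$, and likewise to $B$. For each $Q\in\mathcal Q$ with $|Q\cap A|\ge 2$, take the clique $Q\cap A$. These cliques partition $E(G[A])$, because every edge of $G[A]$ lies in exactly one $Q$, and then in $Q\cap A$. We must now count how many $Q$ satisfy $|Q\cap A|\ge2$.

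Consider first a member $Q$ of order two. It contributes only if it is an edge of $H$ inside $A$, and $A$ is independent in $H$, so no such $Q$ exists. The only contributors on the $A$ side are therefore the members of order at least three, and by \eqref{eq:number-large-cliques} there are at most $r\le 2\delta$ of them. On the $B$ side, a two-vertex member contributes exactly when it is an edge of $H[B]$, which gives at most $e(H[B])\le 3\delta$ members. Large members contribute at most $2\delta$ more. This yields at most $5\delta$ members for $G[B]$, and at most $2\delta\le5\delta$ for $G[A]$.

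The only point needing care is to use the same $\mathcal Q$ and the same $H$ throughout, so that both the bound \eqref{eq:number-large-cliques} on large cliques and the independence of $A$ in $H$ apply. Beyond that the argument is bookkeeping: we track $\Delta\le3\delta$, which explains the constants $6$ and $5$.
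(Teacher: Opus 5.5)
Your proposal is correct and follows the paper's proof essentially step for step. You apply the stability lemma to $H$ with $\Delta\le 3\delta$, transfer the missing-crossing-edge bound from $H$ to $G$ via $H\subseteq G$, and restrict the minimum-weight partition $\mathcal Q$ to each side, using that $A$ is independent in $H$, $r\le 2\delta$, and $e(H[B])\le\Delta\le 3\delta$ to get the bounds $2\delta$ and $5\delta$.
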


\begin{proof}
Take the graph $H$ and the minimum-weight partition $\mathcal Q$ from the
proof of Lemma~\ref{Lem:4.2}, and put
\[
 \Delta=\frac{n^2}{4}-e(H)\leq3\delta.
\]
Apply Lemma~4.3 to $H$. Since $H\subseteq G$, every crossing edge missing from
$G$ is also missing from $H$. Therefore
\[
 \mu\leq\overline e_H(A,B)\leq2\Delta\leq6\delta,
\]
and
\[
 \left||A|-\frac n2\right|
 \leq\sqrt{2\Delta}\leq\sqrt{6\delta}.
\]

Intersect each member of $\mathcal Q$ with $A$, and discard intersections of
order at most one. The resulting cliques partition $E(G[A])$. Since $A$ is
independent in $H$, no two-vertex member of $\mathcal Q$ is retained. By
\eqref{eq:number-large-cliques}, the resulting partition has at most
$r\leq2\delta$ members.

The analogous restriction to $B$ may retain the $r$ members of order at least
three and the two-vertex members corresponding to $E(H[B])$. Thus, $G[B]$ has a
clique partition with at most
\[
 r+e(H[B])\leq2\delta+\Delta\leq5\delta
\]
members.
\end{proof}

The covering number now gives a lower bound on the number of edges within the
two parts.

\begin{lemma}
For the partition in Lemma~4.4, put
\[
 x=\alpha(G[A]),\qquad y=\alpha(G[B]),\qquad
 L=e(G[A])+e(G[B]).
\]
Then
\[
 xy\leq t+\mu\leq7D.
\]
If $D\leq n^2/48$, then
\[
 L\geq\frac{n^2/4-6D}{\sqrt{7D}}-\frac n2
 \geq\frac{n^2}{8\sqrt{7D}}-\frac n2.
\]
\end{lemma}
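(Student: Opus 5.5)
The first inequality comes from the same selection trick used in the proof of Proposition~3.1. Choose a maximum independent set $I\subseteq A$ of $G[A]$ with $|I|=x$, and a maximum independent set $J\subseteq B$ of $G[B]$ with $|J|=y$. Any clique of $G$ contains at most one vertex of $I$ and at most one vertex of $J$, so it covers at most one of the $xy$ pairs in $I\times J$. Of these $xy$ pairs, at most $\mu$ are non-edges of $G$, and every remaining pair is an edge that a minimum clique cover must cover with a distinct clique. Hence $xy-\mu\leq t$. For the second inequality, Lemma~4.4 gives $\mu\leq 6\delta$. Since $D=\delta+t$ with $\delta,t\geq0$, we get $t+\mu\leq t+6\delta\leq 7D$.

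For the edge bound we apply \eqref{eq:Caro-Wei-consequence} to each side separately:
\[
 e(G[A])\geq\frac{|A|^2}{2x}-\frac{|A|}2,\qquad
 e(G[B])\geq\frac{|B|^2}{2y}-\frac{|B|}2 .
\]
Adding these, the linear terms sum to $-n/2$. It then suffices to show
\[
 \frac{|A|^2}{2x}+\frac{|B|^2}{2y}\geq\frac{n^2/4-6D}{\sqrt{7D}}.
\]
By AM--GM, $\frac{|A|^2}{2x}+\frac{|B|^2}{2y}\geq \frac{|A||B|}{\sqrt{xy}}\geq\frac{|A||B|}{\sqrt{7D}}$. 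Writing $|A|=n/2+\varepsilon$ with $\varepsilon^2\leq 6\delta\leq 6D$ (Lemma~4.4), we have $|A||B|=n^2/4-\varepsilon^2\geq n^2/4-6D$, which gives the first displayed bound on $L$. If $x$ or $y$ were $0$ the sides would be empty, but $|A|,|B|>0$ for large $n$ because $6D\leq n^2/8<n^2/4$. The case where $xy$ is small is handled automatically by the inequality $xy\le 7D$.

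Finally, the hypothesis $D\leq n^2/48$ gives $6D\leq n^2/8$, so $n^2/4-6D\geq n^2/8$, and this yields the last inequality. The only step that needs some care is the first one: one must make sure that non-adjacent pairs in $I\times J$ are exactly the missing crossing edges counted by $\mu$, and that the bound on $\varepsilon$ comes from Lemma~4.4 in terms of $\delta\le D$. The remaining steps are routine.
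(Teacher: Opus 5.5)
Your proof is correct and follows the paper's argument essentially step for step. The ingredients are the same: the independent-set selection giving $t\geq xy-\mu$, Caro--Wei applied to each side, AM--GM, and the balance estimate $(|A|-n/2)^2\leq 6\delta\leq 6D$ from Lemma~4.4. The one point worth stating explicitly, as the paper does, is that $x,y\geq 1$ together with $xy\leq 7D$ forces $D>0$, so dividing by $\sqrt{7D}$ is legitimate. Also, the nonemptiness of $A$ and $B$ holds for every $n$, so your qualifier ``for large $n$'' is unnecessary.
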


\begin{proof}
Choose independent sets $I\subseteq A$ and $J\subseteq B$ with
$|I|=x$ and $|J|=y$. A clique of $G$ contains at most one vertex of $I$ and at
most one vertex of $J$, so it covers at most one edge between $I$ and $J$. At
least $xy-\mu$ of the pairs between $I$ and $J$ are edges. Hence
\[
 t\geq xy-\mu,
\]
and Lemma~4.4 gives
\begin{equation}
 xy\leq t+\mu\leq t+6\delta\leq7D.
 \label{eq:xy-bound}
\end{equation}

Write $a=|A|$, so $|B|=n-a$. If $D\leq n^2/48$, Lemma~4.4 implies that both
$A$ and $B$ are nonempty. Thus, $x,y\geq1$, and \eqref{eq:xy-bound} gives
$D>0$. Applying \eqref{eq:Caro-Wei-consequence} to $G[A]$ and $G[B]$, and
then using the arithmetic--geometric mean inequality, we get
\[
 \begin{aligned}
 L
 &\geq \frac{a^2}{2x}+\frac{(n-a)^2}{2y}-\frac n2\geq \frac{a(n-a)}{\sqrt{xy}}-\frac n2\geq \frac{n^2/4-(a-n/2)^2}{\sqrt{7D}}-\frac n2.
 \end{aligned}
\]
By Lemma~4.4, $(a-n/2)^2\leq6\delta\leq6D$. This gives the first inequality.
The second follows from $n^2/4-6D\geq n^2/8$.
\end{proof}

The reverse estimate is obtained by repacking internal edges into
edge-disjoint triangles.

\begin{lemma}
Let $A\mathbin{\dot\cup}B$ be the partition in Lemma~4.4. If
$D\leq n^2/10^4$ then
\[
 \max\{e(G[A]),e(G[B])\}\leq36\delta.
\]
\end{lemma}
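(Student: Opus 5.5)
We bound $e(G[A])$; the argument for $e(G[B])$ is symmetric. The strategy is to exhibit a clique partition of $G$ with fewer than $p$ members whenever $e(G[A])$ is large, and then compare with $p=n^2/4-\delta$.

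\textbf{Step 1: a baseline partition.} Use the cut $(A,B)$ from Lemma~4.4. Take clique partitions $\mathcal P_A$ of $G[A]$ and $\mathcal P_B$ of $G[B]$, each with at most $5\delta$ members, and add every crossing edge as a two-vertex clique. This is a partition of $G$ with about $|A||B|-\mu+10\delta\le n^2/4+10\delta$ members. By itself it gives nothing, so we improve it by letting internal edges absorb crossing edges.

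\textbf{Step 2: repacking.} Let $m=e(G[A])$ and give $G[A]$ a proper edge-colouring. By Vizing's theorem (Theorem~2.4) it uses at most $\Delta(G[A])+1$ colours. For an internal edge $uv\subseteq A$, pick $w\in B$ adjacent to both $u$ and $v$. Replace the two-vertex cliques $uw,vw$ and the edge $uv$ by the triangle $uvw$. This saves one member per edge, provided each crossing edge is used at most once.

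\begin{itemize}
\item Within one colour class (a matching $M$), assign to each edge of $M$ its own vertex $w\in B$. Edges of $M$ are vertex-disjoint, so even a shared $w$ gives distinct crossing edges. The only conflicts come from missing crossing edges, and Lemma~4.4 bounds these by $\mu\le 6\delta$.
\item Across colour classes, crossing edges $uw$ must not be reused. A vertex $u$ lies in at most one edge per colour, so assign colour $c$ to vertices $w$ in a block $B_c\subseteq B$, with the blocks pairwise disjoint. Then the crossing edges at $u$ used by different colours are distinct.
\item This requires $\Delta(G[A])+1$ blocks of usable size inside $|B|\approx n/2$. The maximum degree must therefore be controlled first.
\end{itemize}

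\textbf{Step 3: controlling the degree.} Since $\mathcal P_A$ has at most $5\delta$ cliques and $\delta\le D\le n^2/10^4$, the relevant bounds are $\Delta(G[A])\le$ (something like) $\sqrt{\cdot}$ or $n/100$. A vertex $u\in A$ of large internal degree is itself useful. Treat the cliques of $\mathcal P_A$ through $u$ directly: merge each clique $K\subseteq A$ with one vertex $w\in B$ complete to $K$. The number of vertices of $B$ missing some edge to $K$ is at most $\mu\le 6\delta$, which is small against $n/2$ when $D\le n^2/10^4$. This saves $|K|-1$ members per clique.

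\textbf{Step 4: the count.} Using whole cliques of $\mathcal P_A$ rather than single edges may avoid Vizing entirely. Partition $\mathcal P_A=\{K_1,\dots,K_q\}$ with $q\le5\delta$. Assign to each $K_i$ a distinct $w_i\in B$ complete to $K_i$; distinct $w_i$ make the crossing edges disjoint. This is possible greedily if $q+6\delta<|B|$. Handle any excess by edge-colouring the cliques so that several vertex-disjoint cliques share one $w$.

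The resulting partition has at most
\[
n^2/4+10\delta-\sum_i(|K_i|-1)
\]
members. Since it cannot beat $p=n^2/4-\delta$, we get $\sum_i(|K_i|-1)\le 11\delta$. Each $|K_i|\ge2$ gives $|K_i|-1\ge |K_i|/2$, so $\sum|K_i|\le 22\delta$. Then
\[
e(G[A])\le\sum_i\binom{|K_i|}{2}.
\]
This is bounded by $36\delta$ only if the clique sizes are controlled, because $\binom{k}{2}$ can exceed $k$ considerably.

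\textbf{Main obstacle.} The hard step is converting the saving into an edge bound rather than a vertex-sum bound. Large cliques in $\mathcal P_A$ need finer treatment. The plan there is to decompose each $K_i$ via a one-factorization, or Vizing colouring, into about $|K_i|$ matchings. Each matching is paired with its own distinct $w\in B$, producing triangles $uvw$. This saves one member per internal edge, so the saving is about $e(G[A])-q$. The requirement is about $\sum_i|K_i|\le 22\delta+O(1)$ distinct vertices of $B$, which is fine since $\delta\le n^2/10^4$ forces this to be at most $n/2-6\delta$ for the relevant range. The final inequality is then $e(G[A])-5\delta-O(\delta)\le 11\delta$, and keeping the constants careful should give $36\delta$.
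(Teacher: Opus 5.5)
There is a genuine gap. Your Steps 3--4 and the ``main obstacle'' paragraph rest on size comparisons that are false. The hypothesis $D\le n^2/10^4$ does not make $\mu\le 6\delta$ or $22\delta$ small compared with $n/2$. Under that hypothesis $\delta$ can be of order $n^2$, and even for the graphs $G_{h,k}$ of Section~3 one has $\delta=\frac32hk(k-1)$, of order $n^{4/3}\gg n$. Consequently:
\begin{itemize}
\item you cannot greedily give each clique $K_i$ of $\mathcal P_A$ a distinct apex in $B$ complete to it, since that needs $q+6\delta<|B|$;
\item you cannot reserve $\sum_i|K_i|$ distinct apices for per-clique matchings either;
\item the whole-clique merging only yields the vertex-sum bound $\sum_i|K_i|\le22\delta$, which, as you note, does not control $e(G[A])$.
\end{itemize}
The degree control of Step~3 is never established. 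It is also unnecessary, because $\Delta(G[A])+1\le|A|$ holds trivially and $|A|$ and $|B|$ differ by at most $2\sqrt{6\delta}$.

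The missing idea is that blocks of size one suffice. Missing crossing edges should be handled by a global count, not by choosing good apices.

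Let $X$ be the side with more internal edges, $m=e(G[X])$, and $Y$ the other side. A Vizing colouring of $G[X]$ uses at most $|X|$ colours, and $|Y|\ge|X|-2\sqrt{6\delta}$ by Lemma~4.4. Give each colour class, which is a matching, its own single apex $z\in Y$. If there are more classes than $|Y|$, discard the smallest ones. This loses $\ell\le\frac{|X|-|Y|}{|X|}m<m/9$ edges, and this is the only place where $D\le n^2/10^4$ is used.

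For a retained edge $uv$ with apex $z$, the candidate $uvz$ fails only if $uz$ or $vz$ is missing; charge it to that missing pair. Distinct classes have distinct apices and each class is a matching. Hence the valid triangles are pairwise edge-disjoint, and each missing pair is charged at most once. So at least $T\ge m-\ell-\mu$ candidates are valid, with $\mu\le6\delta$.

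Now build a partition from:
\begin{itemize}
\item these $T$ triangles;
\item two-vertex cliques for all remaining edges in $X$ and across the cut;
\item the $5\delta$-member partition of $G[Y]$ only (do not use $\mathcal P_X$).
\end{itemize}
This gives
\[
 \frac{n^2}{4}-\delta=p\le\frac{n^2}{4}-m+2\ell+2\mu+5\delta,
\]
so $m\le2\ell+18\delta<\frac{2m}{9}+18\delta$, that is, $m<162\delta/7<36\delta$.
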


\begin{proof}
Choose $X\in\{A,B\}$ such that
\[
 m=e(G[X])=\max\{e(G[A]),e(G[B])\},
\]
and put $Y=V(G)\setminus X$, $x=|X|$, and $y=|Y|$. Also put
$\mu=\overline e_G(X,Y)$. By Lemma~4.4,
\[
 \mu\leq6\delta,\qquad
 |x-y|\leq2\sqrt{6\delta},\qquad
 x\geq \frac n2-\sqrt{6D}.
\]
If $m=0$, there is nothing to prove. Otherwise, by Vizing's theorem, $G[X]$
has a proper edge-coloring with $q\leq x$ colours, each color class being a
matching. If $q\leq y$, retain every color class. If $q>y$, retain the $y$
largest color classes. Let $\ell$ be the number of edges in the discarded
classes. In the latter case, averaging gives
\[
 \ell\leq\frac{q-y}{q}m\leq\frac{x-y}{x}m.
\]
Thus, in either case,
\begin{equation}
 \ell\leq\frac{|x-y|}{x}m
 \leq\frac{2\sqrt{6D}}{n/2-\sqrt{6D}}m
 \leq\frac{2\sqrt6}{50-\sqrt6}m
 <\frac{m}{9}.
 \label{eq:discarded-edges}
\end{equation}

Assign the retained colour classes injectively to vertices of $Y$. If an edge
$uv$ has a retained colour assigned to $z\in Y$, call $uvz$ a candidate
triangle. It is valid if both $uz$ and $vz$ are edges of $G$. The valid
candidate triangles are pairwise edge-disjoint. Indeed, their edges in $X$ are
distinct, while a crossing edge $wz$ can occur only in the colour class
assigned to $z$, and a matching contains at most one edge incident with $w$.

At most $\mu$ candidates are invalid. To see this, assign to each invalid
candidate one of its missing crossing edges. A fixed missing pair $wz$ can be
assigned to at most one candidate, because the retained colour assigned to
$z$ is unique and its colour class is a matching. If $T$ is the number of
valid candidates, then
\begin{equation}
 T\geq m-\ell-\mu.
 \label{eq:valid-triangles}
\end{equation}

Use the $T$ valid triangles, use two-vertex cliques for all remaining edges
within $X$ and across $(X,Y)$, and use the clique partition of $G[Y]$ supplied
by Lemma~4.4. Since $e_G(X,Y)\leq xy\leq n^2/4$, this gives
\[
 \begin{aligned}
 p
 &\leq T+(m-T)+\bigl(e_G(X,Y)-2T\bigr)+5\delta \\
 &\leq \frac{n^2}{4}-m+2\ell+2\mu+5\delta,
 \end{aligned}
\]
where the second inequality follows from \eqref{eq:valid-triangles}. Since
$p=n^2/4-\delta$, $\mu\leq6\delta$, and \eqref{eq:discarded-edges} holds,
\[
 m\leq2\ell+18\delta<\frac{2m}{9}+18\delta.
\]
Hence $m<162\delta/7<36\delta$.
\end{proof}

We can now compare the two estimates for $L$.

\begin{proof}[Proof of Proposition~4.1]
Recall that
\[
 D=\frac{n^2}{4}-\bigl(\cpn(G)-\ccn(G)\bigr).
\]
If $D>n^2/10^4$, then $D>10^{-4}n^{4/3}$. We may therefore assume that
$D\leq n^2/10^4$. By Lemma~4.6,
\[
 L=e(G[A])+e(G[B])\leq72\delta\leq72D.
\]
Lemma~4.5 now yields
\[
 \frac{n^2}{8\sqrt{7D}}-\frac n2\leq72D,
\]
and hence
\begin{equation}
 \frac{n^2}{8\sqrt7}\leq72D^{3/2}+\frac n2\sqrt D.
 \label{eq:D-main}
\end{equation}

Suppose first that $D\geq n/144$. Since $n/2\leq72D$, the preceding lower bound
for $L$ gives
\[
 \frac{n^2}{8\sqrt{7D}}\leq144D.
\]
It follows that
\begin{equation}
 D\geq(1152\sqrt7)^{-2/3}n^{4/3}.
 \label{eq:D-lower}
\end{equation}

Suppose next that $D<n/144$. The right-hand side of \eqref{eq:D-main} is then
less than $n^{3/2}/12$, whereas its left-hand side is
$n^2/(8\sqrt7)$. This is impossible for all sufficiently large $n$.
Therefore \eqref{eq:D-lower} holds whenever $D\leq n^2/10^4$ and $n$ is
sufficiently large. Taking
\[
 c_0=\min\left\{10^{-4},(1152\sqrt7)^{-2/3}\right\}
\]
completes the proof.
\end{proof}

The case of odd order follows by adding an isolated vertex.

\begin{corollary}
There is an absolute constant $c>0$ such that every graph $G$ on $n$ vertices
satisfies
\[
 \cpn(G)-\ccn(G)
 \leq\left\lfloor\frac{n^2}{4}\right\rfloor-cn^{4/3}
\]
whenever $n$ is sufficiently large.
\end{corollary}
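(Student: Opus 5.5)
For even $n$ the statement is exactly Proposition~4.1, since then $\lfloor n^2/4\rfloor=n^2/4$. We take $c=c_0/2$ (or any smaller positive constant) and treat odd $n$ by reducing to the even case. Let $G$ have odd order $n$, and let $G'$ be $G$ with one isolated vertex added, so $|V(G')|=n+1$ is even. An isolated vertex lies in no edge, so it can be removed from every clique in a cover or partition. Hence
\[
 \cpn(G')=\cpn(G)\quad\text{and}\quad \ccn(G')=\ccn(G).
\]

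Applying Proposition~4.1 to $G'$ gives, for $n$ sufficiently large,
\[
 \cpn(G)-\ccn(G)\leq \frac{(n+1)^2}{4}-c_0(n+1)^{4/3}.
\]
For odd $n$ we have $\lfloor n^2/4\rfloor=(n^2-1)/4$, so
\[
 \frac{(n+1)^2}{4}=\left\lfloor\frac{n^2}{4}\right\rfloor+\frac{n+1}{2}.
\]
Also $c_0(n+1)^{4/3}\geq c_0n^{4/3}$. It therefore suffices to show
\[
 \frac{n+1}{2}\leq \frac{c_0}{2}\,n^{4/3},
\]
which holds once $n$ is large because $n^{4/3}$ dominates $n$. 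This gives the bound with $c=c_0/2$ for large odd $n$.

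The even case holds with $c_0\geq c$, so a single $c$ and a single threshold for "sufficiently large" cover all $n$. The only point needing care is that the threshold in Proposition~4.1 applies to $n+1$ rather than $n$, but this is harmless. The linear loss $(n+1)/2$ is absorbed into the $n^{4/3}$ term, and this is the reason the constant drops from $c_0$ to $c_0/2$.
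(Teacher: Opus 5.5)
Your proof is correct and follows the paper's argument: add an isolated vertex to reduce odd $n$ to Proposition~4.1 at order $n+1$, absorb the linear excess $\frac{(n+1)^2}{4}-\left\lfloor\frac{n^2}{4}\right\rfloor=\frac{n+1}{2}$ into the $n^{4/3}$ term, and take $c=c_0/2$. Your explicit check that $c\leq c_0$ also covers the even case, as the paper implicitly assumes.
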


\begin{proof}
If $n$ is even, the result follows directly from Proposition~4.1. Suppose that
$n$ is odd, and add one isolated vertex to $G$. Neither clique parameter
changes. Applying Proposition~4.1 to the resulting graph gives
\[
 \cpn(G)-\ccn(G)
 \leq\frac{(n+1)^2}{4}-c_0(n+1)^{4/3}.
\]
Since
\[
 \frac{(n+1)^2}{4}-\left\lfloor\frac{n^2}{4}\right\rfloor
 =\frac{n+1}{2},
\]
the linear term is at most $c_0(n+1)^{4/3}/2$ for all sufficiently large $n$.
Thus one may take $c=c_0/2$.
\end{proof}

\begin{proof}[Proof of Theorem~1.2]
The lower and upper bounds follow from Corollaries~3.2 and~4.7, respectively.
\end{proof}

\end{document}